\newcounter{thecounter}
\numberwithin{thecounter}{section}
\newtheorem{lemma}[thecounter]{Lemma}
\newtheorem{theorem}[thecounter]{Theorem}
\theoremstyle{definition}
\newcommand{\Z}{{\mathbb Z}}
\newcommand{\R}{{\mathbb R}}
\begin{document}

\title{Kac--Moody Fibonacci sequences}
\author{Lisa Carbone and Pranav Shankar}
\date{}
\begin{abstract} We summarize known results on how to generate an infinite family of integer sequences from the root lattices of rank 2 Kac--Moody algebras. We compute and tabulate the first twenty entries of a number of these sequences. This provides an overarching framework for a large class of Fibonacci-type integer sequences, evaluations of Chebyshev S and U-polynomials and others.
    \end{abstract}


\maketitle

Let $A$ be the generalized Cartan matrix
$$
A = (a_{ij})_{i,j=1,2} =
\begin{pmatrix}
2 & -a \\
-b & 2
\end{pmatrix}
$$
with $a,b \in \Z_{>0}$, $ab \ge 4$, $a \geq b$, with Kac-Moody
algebra $\mathfrak{g}=\mathfrak{g}(A)$, root system
$\Delta=\Delta(A)$, and Weyl group $W=W(A)$. When $ab=4$, $A$ is
positive semi-definite but not positive definite, and  $A$ is {\it
affine}. When $ab>4$,   $A$ is {\it hyperbolic}.

Let $\Pi = \{\alpha_1, \alpha_2\}$ be the simple roots and
let $w_1, w_2$ be the simple reflections. We have $$W=\langle
w_1\rangle\ast\langle
w_2\rangle\cong\mathbb Z/2\mathbb Z\ast\mathbb Z/2\mathbb Z$$ so
every element of $W$ is of the form
$w_1^{\epsilon_1}(w_2w_1)^nw_2^{\epsilon_2}$,
where $\epsilon_i\in\{0,1\}$, $i=1,2$ and $n\in\mathbb Z_{\geq 0}$.
Since $W$ is an infinite group, the set of translates $\Delta^{{\rm re}}=W\Pi$ of $\Pi$ by
$W$ is infinite, and the corresponding Lie algebra $\mathfrak{g}(A)$ is infinite
dimensional.

The Weyl group translates of the
simple roots $\alpha_1$ and $\alpha_2$ give rise to a sequence of
polynomials in $a$ and $b$, where $-a$ and $-b$ are the off diagonal
entries of the generalized Cartan matrix.

In \cite{ACP} and \cite{CKMS}, the authors associate `Fibonacci type' integer sequences with the Weyl group and root lattice of these algebras. These sequences are derived explicitly from the coordinates of the real root vectors in the root space

\section{The setting}
We may identify the root lattice $ \mathbb{Z}\alpha_1 \oplus \mathbb{Z}\alpha_2$ corresponding to $A$ with $\R^{1,1}$. Then any root $\beta$ is expressed as a linear combination of the basis vectors (simple roots):
\[
\beta = x\alpha_1 + y\alpha_2 \quad \text{where } x, y \in \mathbb{Z}.
\]
The authors identify $\alpha_1$ with the coordinate  $(1,0)$ and $\alpha_2$ with $(0,1)$  to denote their coefficients in this basis.

The geometry of the root space, specifically the lengths of the roots, is encoded in the quadratic form $Q(x,y)$:
\[
Q(x,y) = 2x^2 - 2axy + \left(\frac{2a}{b}\right)y^2.
\]

 For every root $\beta\in\Delta^{{\rm re}}$ we write
$$
\beta=x(\beta)\alpha_1+y(\beta)\alpha_2,
$$
for $x(\beta), y(\beta)\in \Z$ to give an expression for $\beta$ in 
$$
\Delta^{{\rm re}}\cap(\Z\alpha_1\oplus\Z\alpha_2).
$$
Then 
$$
x(\alpha_1)=1,\ y(\alpha_1)=0
$$
$$
x(\alpha_2)=0,\ y(\alpha_2)=1.
$$
For $i=1,2$, we write $W^+\{\alpha_i\}=W\{\alpha_i\}\cap\Delta^{{\rm re}}_+$, where
$\Delta^{{\rm re}}$ denotes the real roots. Then
$$
W^+\{\alpha_1\}=\{(w_1w_2)^n\alpha_1, \ n\geq 0\}\ \sqcup\
\{w_2(w_1w_2)^n\alpha_1, \ n\geq 0\},
$$
$$
W^+\{\alpha_2\}=\{(w_2w_1)^n\alpha_2, \ n\geq 0\}\ \sqcup\
\{w_1(w_2w_1)^n\alpha_2, \ n\geq 0\}.
$$
Note that for each real root $\alpha\in\Delta^{{\rm re}}$, if we write
$\alpha=w\alpha_i$, the word $w$ is reduced and ends in $w_{3-i}$.

\medskip
\noindent
We tabulate $x(\alpha)$ and $y(\alpha)$ for all $\alpha\in\Delta^{{\rm re}}$, where
$\ell(w)=n$ for some $n\geq 0$, and $\ell(\cdot)$ is the standard length
function on $W$:
\begin{center}
\begin{tabular}{| r | r | r | r | }
\hline
&{} & {} & {} \\

$n=\ell(w)$
& $\alpha\in\{(w_1w_2)^{\lfloor n/2 \rfloor}\alpha_1\mid\ n\geq 0\}$
& $x(\alpha)$
& $y(\alpha)$
\\
\hline
$0$
& $\alpha_1$
& $1$
& $0$
\\

$2$
& $w_1w_2\alpha_1$
& $ab-1$
& $b$
\\

$4$
& $w_1w_2w_1w_2\alpha_1$
& $a^2b^2-3ab+1$
& $ab^2-2b$
\\
$\vdots$
& $\vdots$
& $\vdots$
& $\vdots$\\

\hline
\end{tabular}

\bigskip

\begin{tabular}{| r | r | r | r | }
\hline
&{} & {} & {} \\

$n=\ell(w)$
& $\alpha\in\{w_2(w_1w_2)^{\lfloor n/2 \rfloor}\alpha_1\mid \ n\geq 0\}$
& $x(\alpha)$
& $y(\alpha)$
\\
\hline
$1$
& $w_2\alpha_1$
& $1$
& $b$
\\
$3$
& $w_2w_1w_2\alpha_1$
& $ab-1$
& $ab^2-2b$

\\
$5$
& $w_2(w_1w_2)^2\alpha_1$
& $a^2b^2-3ab+1$
& $a^2b^3-4ab^2+3b$

\\

$\vdots$
& $\vdots$
& $\vdots$
& $\vdots$\\

\hline
\end{tabular}

\bigskip

\begin{tabular}{| r | r | r | r | }
\hline
&{} & {} & {} \\

$n=\ell(w)$
& $\alpha\in\{(w_2w_1)^{\lfloor n/2 \rfloor}\alpha_2\mid \ n\geq 0\}$
& $x(\alpha)$
& $y(\alpha)$
\\
\hline

$0$
& $\alpha_2$
& $0$
& $1$
\\

$2$
& $w_2w_1\alpha_2$
& $a$
& $ab-1$
\\

$4$
& $w_2w_1w_2w_1\alpha_2$
& $a^2b-2a$
& $a^2b^2-3ab+1$
\\

$\vdots$
& $\vdots$
& $\vdots$
& $\vdots$\\

\hline
\end{tabular}

\bigskip

\begin{tabular}{| r | r | r | r | }
\hline

&{} & {} & {} \\

$n=\ell(w)$
& $\alpha\in\{w_1(w_2w_1)^{\lfloor n/2 \rfloor}\alpha_2\mid \ n\geq 0\}$
& $x(\alpha)$
& $y(\alpha)$
\\
\hline
$1$
& $w_1\alpha_2$
& $a$
& $1$
\\

$3$
& $w_1w_2w_1\alpha_2$
& $a^2b-2a$
& $ab-1$

\\
$5$
& $w_1(w_2w_1)^2\alpha_2$
& $a^3b^2-4a^2b+3a$
& $a^2b^2-3ab+1$

\\

$\vdots$
& $\vdots$
& $\vdots$
& $\vdots$\\

\hline
\end{tabular}

\end{center}

\medskip
\noindent
We now rewrite the coefficients $x(\alpha)$ and $y(\alpha)$ as
tabulated above in the language of sequences. For $i=1,2$ and for
$\alpha\in W^+\{\alpha_i\}$, we have $\alpha=w\alpha_i$ for some $w\in
W$, where $\ell(w)=n$ for some $n\geq 0$, and $\ell(\cdot)$ is the standard
length function on $W$.
For $i=1,2$, we introduce the following notation for the polynomials
$x(\alpha)$ and $y(\alpha)$. We set
$$
x_{i,n}:=x_{i,n}(a,b)=x(\alpha){\text { where }}\alpha=w\alpha_i,\
\ell(w)=n,
$$
$$
y_{i,n}:=y_{i,n}(a,b)=y(\alpha){\text { where }}\alpha=w\alpha_i,\
\ell(w)=n.
$$

Once a real root $\alpha=w\alpha_i$ is fixed, the pair $(x(\alpha),y(\alpha))$ is unique for each $n=\ell(w)$.
\medskip
\begin{lemma} \label{lem1}
We have
$$
x_{1,n}=x_{1, n+1},\ n\text{ even, } n\geq 0,
$$
$$
y_{1,n}=y_{1, n-1},\ n\text{ even, } n\geq 2.
$$
\par
$$
x_{2,n}=x_{2,n-1},\ n\text{ even, } n\geq 2,
$$
$$
y_{2,n}=y_{2,n+1},\ n\text{ even, } n\geq 0.
$$
\end{lemma}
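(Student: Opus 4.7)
The plan is to reduce all four equalities to a single observation about how the simple reflections act on root coordinates. From the action $w_i\alpha_j = \alpha_j - a_{ij}\alpha_i$ one computes
$$w_1(x\alpha_1 + y\alpha_2) = (ay-x)\alpha_1 + y\alpha_2, \qquad w_2(x\alpha_1 + y\alpha_2) = x\alpha_1 + (bx-y)\alpha_2.$$
Thus $w_1$ preserves the $\alpha_2$-coefficient of any root, while $w_2$ preserves the $\alpha_1$-coefficient. Each of the four claims then follows by exhibiting the two roots of the relevant lengths as differing by a single application of $w_1$ or $w_2$.

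For $x_{1,n} = x_{1,n+1}$ with $n$ even, $n \geq 0$, the tabulated roots at lengths $n$ and $n+1$ in $W^+\{\alpha_1\}$ are $(w_1w_2)^{n/2}\alpha_1$ and $w_2(w_1w_2)^{n/2}\alpha_1$, which differ by left multiplication by $w_2$; the equality follows since $w_2$ fixes the $\alpha_1$-coefficient. For $y_{1,n} = y_{1,n-1}$ with $n$ even, $n \geq 2$, I would use the factorization $(w_1w_2)^{n/2}\alpha_1 = w_1 \cdot w_2(w_1w_2)^{n/2-1}\alpha_1$, exhibiting the length-$n$ root as $w_1$ applied to the length-$(n-1)$ root; the result then follows from the $\alpha_2$-invariance of $w_1$. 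The two identities for $i=2$ are completely symmetric: the factorization $(w_2w_1)^{n/2}\alpha_2 = w_2 \cdot w_1(w_2w_1)^{n/2-1}\alpha_2$ gives $x_{2,n} = x_{2,n-1}$ via $w_2$-invariance of the $\alpha_1$-coefficient, and $w_1(w_2w_1)^{n/2}\alpha_2 = w_1 \cdot (w_2w_1)^{n/2}\alpha_2$ gives $y_{2,n+1} = y_{2,n}$ via $w_1$-invariance of the $\alpha_2$-coefficient.

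There is no serious obstacle here; the lemma is a transparent consequence of the coordinate-invariance of each simple reflection. The only mild bookkeeping is to confirm that the factorizations above match the enumerations of $W^+\{\alpha_i\}$ provided in the tables, which is immediate from the normal form $w_1^{\epsilon_1}(w_2w_1)^k w_2^{\epsilon_2}$ recorded for elements of $W$. The boundary cases (for instance $x_{1,0}=x_{1,1}=1$ at $n=0$) are absorbed into the same argument, so no separate case analysis is needed.
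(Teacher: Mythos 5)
Your proof is correct. The paper states Lemma \ref{lem1} without any proof (the identities are left to be read off from the tabulated coordinates and are taken from \cite{ACP}), so there is no argument of the paper's to compare against; your observation that $w_1$ fixes the $\alpha_2$-coefficient and $w_2$ fixes the $\alpha_1$-coefficient of any vector in the root lattice, combined with the factorizations relating the roots of consecutive lengths within each family $W^+\{\alpha_i\}$, is exactly the right general argument, and it checks against every displayed entry of the tables. The one implicit ingredient is the paper's remark that each word $w$ with $\alpha=w\alpha_i$ is reduced, which is what guarantees that your factorizations really do pair a root of length $n$ with one of length $n\pm 1$; you invoke this correctly via the normal form for elements of $W$.
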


It is known that:
\begin{enumerate}
\item If $a=b=-2$,  then $A$ is affine and \cite{K}:
$$Q(\alpha,\beta) = \{j\alpha+(j+1)\beta,\  (j+1)\alpha+j\beta\ j\in \mathbb{Z}_{>0} \}.$$
\item If $a=b=-3$, then $A$ is hyperbolic and \cite{F}:
$$Q(\alpha,\beta) = \{\phi_{2j}\alpha+\phi_{(2j+2)}\beta,\  \phi_{(2j+2)}\alpha+\phi_{2j}\beta\ j\in \mathbb{Z}_{>0} \},$$ where $\phi_{j}$ is the $j^{\mbox{th}}$ Fibonacci number. 
\end{enumerate}

\section{Integer sequences in the root lattice}

For $m \geq 0$ define
\[
\begin{aligned}
X_1(a,b) &= (x_{1,2m}), & \quad Y_1(a,b) &= (y_{1,2m}), \\
X_2(a,b) &= (x_{2,2m}), & \quad Y_2(a,b) &= (y_{2,2m}).
\end{aligned}
\]

These satisfy linear recurrences such as
\[
x_{1,2m} = (ab-2)x_{1,2m-2} - x_{1,2m-4}, \qquad m \geq 2,
\]

$$
x_{2,2m} = (ab-2)x_{2,2m-2} - x_{2,2m-4}
$$

with analogous formulas for $y_{1,2m}$, $y_{2,2m}$.  
The initial conditions come from the simple roots:
\[
x_{1,0}=1,\; y_{1,0}=0, \qquad x_{2,0}=0,\; y_{2,0}=1.
\]

It follows that for each $(a,b)$ with $ab\geq 4$ we have  integer sequences:
\[
X_1(a,b) = Y_2(a,b), \qquad X_2(a,b) = Y_1(b,a).
\]
Now let $X_2(a,a) = \{u_0, u_1, u_2, \dots\}$ and $X_1(a,a) = \{v_0, v_1, v_2, \dots\}$. Define a  sequence $X(a) = \{x_0, x_1, x_2, \dots\}$ where the general term $x_n$ is given by:

\begin{align*}
x_n = 
\begin{cases} 
u_{n/2} & \text{if } n \text{ is even} \quad (\text{the } \frac{n}{2}\text{-th term of } X_2(a,a)) \\
v_{(n-1)/2} & \text{if } n \text{ is odd} \quad (\text{the } \frac{n-1}{2}\text{-th term of } X_1(a,a))
\end{cases}
\end{align*}

The sequence $X(a) = \{x_n\}_{n \ge 0}$ satisfies the following  recurrence relation:

$$
x_n = a x_{n-1} - x_{n-2} \quad \text{for } n \ge 2
$$
with initial conditions
$x_0 = 0$, 
$x_1 = 1$.

From \cite{ACP} we have

\begin{enumerate}
\item \begin{align*}X_1(a,b)&=(x_{1,2m})_{m\in \Z_{\geq 0}}=\{1, ab-1, a^2b^2-3ab+1,\dots\}\\
X_2(a,b)&=(x_{2,2m})_{m\in \Z_{\geq 0}}=\{0, a, a^2b-2a, a^3b^2-4a^2b+3a,\dots\}\end{align*}
\item $X(a) 
 = \{0, 1, a, a^2-1, a^3-2a, a^4-3a^2+1,\dots\}$.
 
\item For each pair $a,b$, $ab\ge4$, we have 
$X_1(a,b)=X_1(b,a)$.
\item For each pair $a,b$, $ab\ge4$, we have $aX_2(1,ab)=X_2(a,b).$ 
\item For each $m\in \Z_{\geq0}$, $a,b\in\Z_{\geq 1}$, $ab\geq 4$ we have
$$
x_{1,2m}(a,b)\quad=\quad
U_{2m}\left(\frac{\sqrt{ab}}{2}\right ),
$$
and
$$
x_{2,2m-1}(a,b)\quad=\quad \sqrt{\frac{a}{b}}\cdot
U_{2m-1}\left(\frac{\sqrt{ab}}{2}\right ),
$$
where $U_{n}\left(\frac{\sqrt{ab}}{2}\right )$ is the Chebyshev
$U$-polynomial $U_n(x)$ evaluated at $x=\frac{\sqrt{ab}}{2}$.
\item For each $i=1, 2$ and $a\geq 2$, $X_i(a,a)$ and
$Y_i(a,a)$ are  bisections of $X(a)$.
\end{enumerate}

\newpage
Recall that  $X(k) = \{x_0, x_1, x_2, \dots\}$ is defined by:
\begin{align*}
    x_0 &= 0, \quad x_1 = 1 \\
    x_n &= k x_{n-1} - x_{n-2} \quad \text{for } n \ge 2
\end{align*}

\begin{theorem} The sequences $X_2(a,b)$ and $X_1(a,b)$ can be expressed in terms of the  sequence $X(k)$, where $k = ab-2$ as follows:
\begin{enumerate}
 
\item 
    \[ X_2(a,b) = a \cdot X(ab-2) \]
    
    \item 
    \[ X_1(a,b)_n = X(ab-2)_{n} + X(ab-2)_{n+1} \]
\end{enumerate}
\end{theorem}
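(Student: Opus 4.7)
The plan is to prove both (1) and (2) by induction on $m$ (respectively $n$), using the fact, already recorded in the excerpt, that the sequences $x_{1,2m}$ and $x_{2,2m}$ both satisfy the three-term linear recurrence $z_m = (ab-2)z_{m-1} - z_{m-2}$ for $m \geq 2$. Since $X(k)$ is defined by exactly this recurrence with $k=ab-2$, any linear combination of shifted copies of $X(ab-2)$ also satisfies it, so the problem reduces to matching initial conditions.

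For part (1), I would verify that $a \cdot X(ab-2)$ satisfies the same recurrence as $X_2(a,b)$ by linearity: if $X(k)_m = k X(k)_{m-1} - X(k)_{m-2}$, then $a X(k)_m = k\bigl(a X(k)_{m-1}\bigr) - a X(k)_{m-2}$. I would then check the two base cases against the table: $a \cdot X(ab-2)_0 = 0 = x_{2,0}$ and $a \cdot X(ab-2)_1 = a = x_{2,2}$. Induction on $m$ then forces $x_{2,2m} = a \cdot X(ab-2)_m$ for all $m \geq 0$.

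For part (2), the key observation is that if $(z_n)$ satisfies $z_n = k z_{n-1} - z_{n-2}$, then the shifted-sum sequence $(z_n + z_{n+1})$ satisfies the same recurrence, since
\[
k(z_{n-1} + z_n) - (z_{n-2} + z_{n-1}) = \bigl(k z_{n-1} - z_{n-2}\bigr) + \bigl(k z_n - z_{n-1}\bigr) = z_n + z_{n+1}.
\]
Applied to $X(ab-2)$, this shows that $n \mapsto X(ab-2)_n + X(ab-2)_{n+1}$ satisfies the recurrence $z_n = (ab-2)z_{n-1} - z_{n-2}$, the same one obeyed by $X_1(a,b)_n = x_{1,2n}$. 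The base cases are $X(ab-2)_0 + X(ab-2)_1 = 0 + 1 = 1 = x_{1,0}$ and $X(ab-2)_1 + X(ab-2)_2 = 1 + (ab-2) = ab-1 = x_{1,2}$, agreeing with the entries in the table. Induction on $n$ completes the proof.

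There is no real obstacle in this proof: once the common recurrence is isolated, both identities reduce to verifying two initial values on each side. The only point requiring a moment of care is the index bookkeeping, namely that $X_i(a,b)$ is the bisection $(x_{i,2m})_{m \geq 0}$ of the underlying Weyl-group-indexed sequence, so that the doubled-index recurrence $x_{i,2m} = (ab-2)x_{i,2m-2} - x_{i,2m-4}$ really does match, term-for-term, the recurrence defining $X(ab-2)$.
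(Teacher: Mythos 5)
Your proposal is correct and follows essentially the same route as the paper's proof: both parts are reduced to the common recurrence $z_n=(ab-2)z_{n-1}-z_{n-2}$, the shifted-sum sequence is shown to satisfy it by the same algebraic manipulation, and the two initial terms are matched against $x_{2,0}=0$, $x_{2,2}=a$ and $x_{1,0}=1$, $x_{1,2}=ab-1$. Your explicit remark about the index bookkeeping for the bisection $(x_{i,2m})_{m\ge 0}$ is a small clarification the paper leaves implicit, but the argument is the same.
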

\begin{proof}
For (1), let $Y_n$ be the $n$-th term of $X_2(a,b)$ and $Z_n$ be the $n$-th term of $a \cdot X(k)$.

    The sequence $X_2(a,b)$ satisfies the recurrence $Y_n = (ab-2)Y_{n-1} - Y_{n-2}$.
    The sequence $X(k)$ satisfies $x_n = k x_{n-1} - x_{n-2}$. So $Z_n$ satisfies $Z_n = k Z_{n-1} - Z_{n-2}$.
    Since $k = ab-2$, both satisfy the {same recurrence}.

  For $X_2(a,b)$: $Y_0 = 0$, $Y_1 = a$.
      For $a \cdot X(k)$: $Z_0 = a(0) = 0$, $Z_1 = a(1) = a$.

 Since the recurrence and initial values are identical, the sequences are identical.

For (2), let $S_n = x_n + x_{n+1}$ be the sequence of sums from $X(k)$.

    We verify that $S_n$ satisfies the standard recurrence $S_n = k S_{n-1} - S_{n-2}$.
    \begin{align*}
        k S_{n-1} - S_{n-2} &= k(x_{n-1} + x_n) - (x_{n-2} + x_{n-1}) \\
        &= (k x_{n-1} - x_{n-2}) + (k x_n - x_{n-1}) \\
        &= x_n + x_{n+1} \\
        &= S_n
    \end{align*}

The sequence $X_1(a,b)$ is defined by the initial conditions $x_{1,0} = 1$ and $x_{1,2} = ab-1$.
  We have $S_0 = x_0 + x_1 = 0 + 1 = 1$, which equals the first term of $X_1(a,b)$.
    And $S_1 = x_1 + x_2 = 1 + (k \cdot 1 - 0) = 1 + k$.
  Setting $k = ab-2$ we obtain
    \[ S_1 = 1 + (ab-2) = ab - 1 \]
  which matches the second term of $X_1(a,b)$.

\end{proof}

\clearpage
\section{Tables of integer sequences}
\begin{table}[h!]
\centering
\caption{$X(j)$, for $j=2,\dots,20$}
\label{tab:sequence_data_x_j}
\begin{tabular}{|c|p{6cm}|p{7.5cm}|}
\hline 
 \textbf{j} & \textbf{Sequence Terms} & \textbf{OEIS entry and description} \\ \hline
$2$ & 0, 1, 2, 3, 4, \dots &  A001477 \\ \hline
$3$ & 0, 1, 3, 8, 21, \dots & A001906 Bisection of the Fibonacci sequence \\ \hline
$4$ & 0, 1, 4, 15, 56, \dots & A001353 The number of spanning trees in a $2\times n$ grid
\\ \hline
$5$ & 0, 1, 5, 24, 115, \dots & A004254 \\ \hline
$6$ & 0, 1, 6, 35, 204, \dots & A001109 Square roots of square triangular numbers  \\ \hline
$7$ & 0, 1, 7, 48, 329, \dots & A004187 \\ \hline
$8$ & 0, 1, 8, 63, 496, \dots & A001090 \\ \hline
$9$ & 0, 1, 9, 80, 711, \dots & A018913 \\ \hline
$10$ & 0, 1, 10, 99, 980, \dots & A004189  Indices of square numbers which are also generalized pentagonal numbers \\ \hline
$11$ & 0, 1, 11, 120, 1309, \dots & A004190 \\ \hline
$12$ & 0, 1, 12, 143, 1704, \dots & A004191 \\ \hline
$13$ & 0, 1, 13, 168, 2171, \dots & A078362 \\ \hline
$14$ & 0, 1, 14, 195, 2716, \dots & A007655 \\ \hline
$15$ & 0, 1, 15, 224, 3345, \dots & A078364 \\ \hline
$16$ & 0, 1, 16, 255, 4064, \dots & A077412 \\ \hline
$17$ & 0, 1, 17, 288, 4879, \dots & A078366 \\ \hline
$18$ & 0, 1, 18, 323, 5796, \dots & A049660 \\ \hline
$19$ & 0, 1, 19, 360, 6821, \dots & A078368 \\ \hline
$20$ & 0, 1, 20, 399, 7960, \dots & A075843 \\ \hline
\end{tabular}
\end{table}

 \begin{figure}[hbt!]\begin{center}
\includegraphics[scale=0.25]{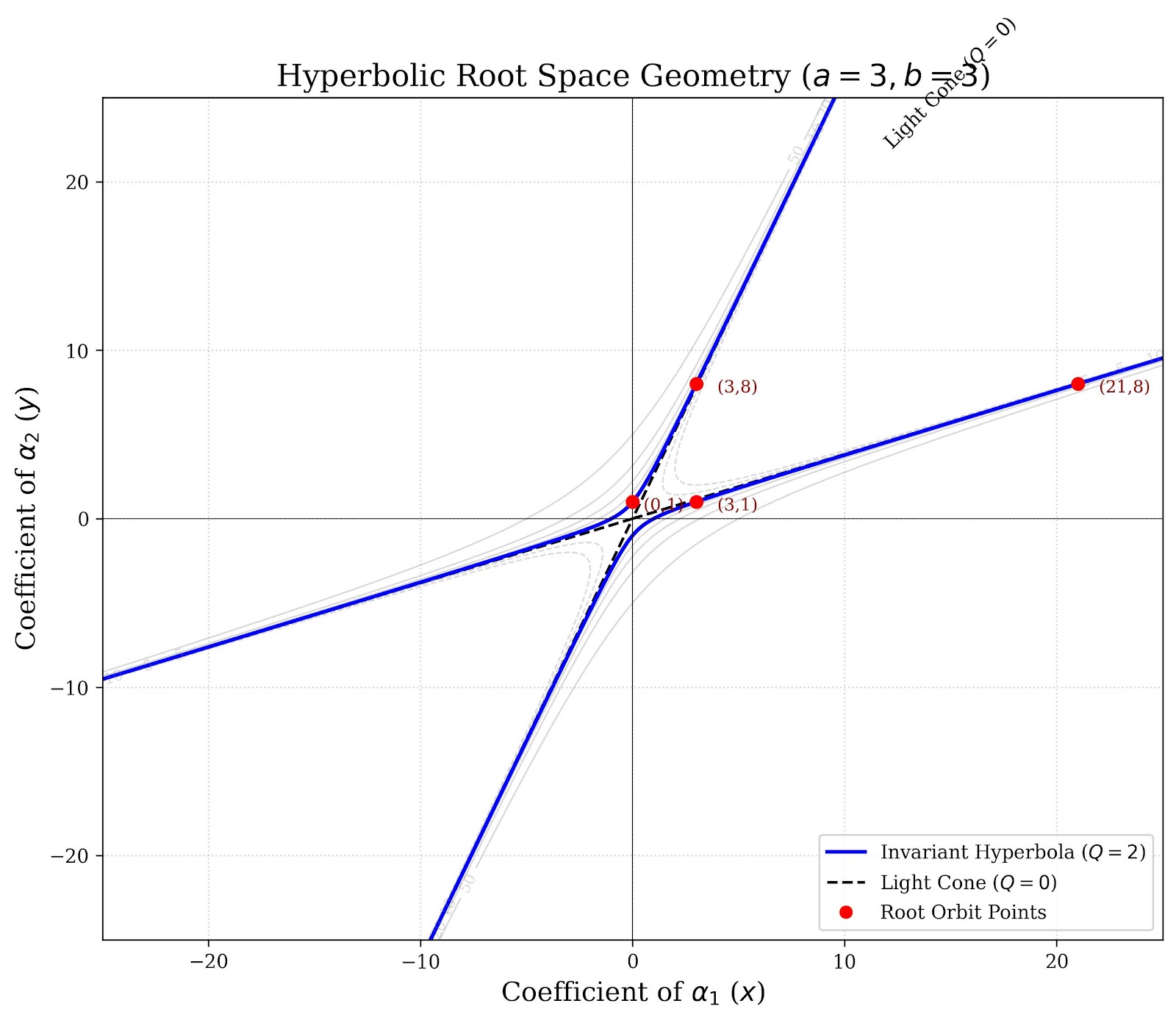}
\end{center}\end{figure}

\clearpage
\begin{table}[h!]
\centering
\caption{$X_1(1,j)$ for $j=4,\dots,20$: Descriptions and OEIS References}
\label{tab:sequence_data_desc}
\begin{tabular}{| r | l |  l  |}
\hline
\textbf{j} & \textbf{Sequence terms} & \textbf{OEIS entry and description}\\
\hline

$4$ & $ 1,3,5,7,9,\dots$ & \text{ A005408}\\
\hline

$5$ & $ 1,4,11,29,76,\dots$ & 

 A002878: Bisection of the Lucas sequence
\\

\hline

$6$ & $ 1,5,19,71,265,\dots$ & \text{ A001834}\\
\hline

$7$ & $ 1,6,29,139,666,\dots$ & 
$\begin{matrix}
{\text{ A030221:}}
{\text{ Chebyshev even indexed $U$-polynomials}}\\ 
{\text{ evaluated at $\sqrt{7}/2$}}
\end{matrix}$
\\

\hline

$8$ & $ 1,7,41,239,1393,\dots$ & 
$\begin{matrix}
&{\text{ A002315:}}
&{\text{ the Newman-Shanks-Williams numbers}}
\end{matrix}$
\\

\hline

$9$ & $ 1,8,55,377,2584,\dots$ &

A033890: Fibonacci$(4n+2)$
\\

\hline
$10$ & $ 1,9,71,559,4401,\dots$ & 
$\begin{matrix}
{\text{ A057080:}}
{\text{ Chebyshev even indexed $U$-polynomials}}
{\text{ evaluated }}\\
{\text{at $\sqrt{10}/2$ with $\lim_{n\longrightarrow\infty} a(n)/a(n-1) = 4 + \sqrt{15}$}}
\end{matrix}$
\\
\hline
$11$ & $ 1,10,89,791,7030,\dots$ & 
$\begin{matrix}
&{\text{ A057081:}}
{\text{ Chebyshev even indexed $U$-polynomials}}\\ 
&{\text{ evaluated at $\sqrt{11}/2$}}
\end{matrix}$
\\

\hline
$12$ & $ 1,11,109,1079,10681,\dots$ & 
$\begin{matrix}
{\text{ A054320:}}
{\text{ Chebyshev even indexed $U$-polynomials}}
{\text{ evaluated}}\\ 
{\text{ at $\sqrt{3}$. Squares of entries are star numbers}}
\end{matrix}$
\\
\hline
$13$ & $  1,12,131,1429,15588, \dots$ & 
$\begin{matrix}
{\text{ A097783:}}
{\text{ Chebyshev polynomials}}
{\text{ with diophantine property}}
\end{matrix}$ 
\\
\hline

$14$ & $  1,13,155,1847,22009, \dots$ & 
$\begin{matrix}
{\text{ A077416:}}
{\text{ Chebyshev $S$-sequence}}
{\text{with diophantine property}}
\end{matrix}$ 
\\
\hline

$15$ & $ 1, 14, 181, 2339, 30226,\dots$ & \text{ A126866}\\
\hline
$16$ & $  1,15,209,2911,40545, \dots$ & 
$\begin{matrix}
{\text{ A028230:}}
{\text{ Indices of square numbers which}}
{\text{ are also octagonal}}
\end{matrix}$
\\
\hline
$17$ & $ 1, 16, 239, 3569, 53296,\dots$ &
$\begin{matrix}
{\text{ A161591:}}
{\text{ The list of the B values in the common solutions}}\\
{\text{ to the $2$ equations $13k+1 = A^2$, $17k+1 = B^2$}}
\end{matrix}$
\\
\hline
$18$ & $ 1, 17, 271, 4319, 68833,\dots$ &
$\begin{matrix}
{\text{ A159678:}}
{\text{
 The $b(j)$ solutions of the 2-equation problem }}\\
{ \text{$7n(j) + 1 = a(j)*a(j)$ and $9n(j) + 1 = b(j)*b(j)$}}\\
 {\text{ with positive integer numbers.}}

\end{matrix}$

\\
\hline
$19$ & $ 1, 18, 305, 5167, 87534,\dots$ &
$\begin{matrix}
{\text{ A161599:}}
{\text{ The list of the B values in the common solutions}}\\
{\text{to the $2$ equations $15k+1 = A^2$, $19k+1 = B^2$}}
\end{matrix}$
\\
\hline
$20$ & $ 1,19,341,6119,109801,\dots$ & 
$\begin{matrix}
{\text{ A049629:}}
{\text{ $a(n)=1/4[F(6n+4)+F(6n+2)]$}}\\
{\text{ where $F(n)$ is the Fibonacci sequence}}
\end{matrix}$
\\
\hline
\end{tabular}
\end{table}

\clearpage
\begin{table}[h!]
\centering
\caption{$X_1(2,j)$ for $j=1,\dots,20$}
\vspace{0.2cm}
\begin{tabular}{|c|l|p{7cm}|}
\hline
\textbf{j} & \textbf{Sequence terms}  & \textbf{OEIS entry and Description} \\ \hline
1 & 1, 1, -1, -1, 1, 1, -1, \dots & {A057078}: Periodic sequence (Period 6). \\ \hline
2 & 1, 3, 5, 7, 9, 11, 13, \dots & {A005408}: The odd numbers. \\ \hline
3 & 1, 5, 19, 71, 265, 989, \dots & {A001834}: $a(n) = 4a(n-1) - a(n-2)$. \\ \hline
4 & 1, 7, 41, 239, 1393, 8119, \dots & {A002315}: Newman-Shanks-Williams numbers. \\ \hline
5 & 1, 9, 71, 559, 4401, 34649, \dots & {A057080}: Values of $U_{2n}(\frac{\sqrt{10}}{2})$. \\ \hline
6 & 1, 11, 109, 1079, 10681, \dots & {A054320}: Values of $U_{2n}(\sqrt{3})$. \\ \hline
7 & 1, 13, 155, 1847, 22009, \dots & {A077416}: Chebyshev S-sequence. \\ \hline
8 & 1, 15, 209, 2911, 40545, \dots &  \\ \hline
9 & 1, 17, 271, 4319, 68833, \dots & {A126866}: Recurrence $a(n) = 16a(n-1) - a(n-2)$. \\ \hline
10 & 1, 19, 341, 6119, 109801, \dots & {A049629}: Recurrence $a(n) = 18a(n-1) - a(n-2)$. \\ \hline
11 & 1, 21, 419, 8359, 166761, \dots & {A083043}: Recurrence $a(n) = 20a(n-1) - a(n-2)$. \\ \hline
12 & 1, 23, 505, 11087, 243409, \dots & {A133283}: Numbers $k$ such that $30k^2 + 6$ is a square \\ \hline
13 & 1, 25, 599, 14351, 343825, \dots & {A159661}: The $b(j)$ solutions to the $2$ equations problem $11n(j) + 1 = a(j)^2$ and $13n(j)+1 = b(j)^2$ with positive integer elements.\\ \hline
14 & 1, 27, 701, 18199, 472473, \dots &  {A157461}: Expansion of $\frac{x(x+1)}{x^2-26x+1}.$\\ \hline
15 & 1, 29, 811, 22679, 634201, \dots &  {A159669}: Expansion of $\frac{x(x+1)}{x^2-28x+1}.$\\ \hline
16 & 1, 31, 929, 27839, 834241, \dots &  {A157878}: Expansion of $\frac{x(x+1)}{x^2-30x+1}.$ \\ \hline
17 & 1, 33, 1055, 33727, 1078209, \dots & {A159675}: Expansion of $\frac{x(x+1)}{x^2-32x+1}.$  \\ \hline
18 & 1, 35, 1189, 40391, 1372105, \dots & {A046176}: Indices of hexagonal square numbers. \\ \hline
19 & 1, 37, 1331, 47879, 1722313, \dots &  \\ \hline
20 & 1, 39, 1481, 56239, 2135601, \dots & {A097314}: Pell equation solutions related to $38$. \\ \hline
\end{tabular}
\end{table}

\clearpage
\begin{table}[h!]
\centering
\caption{$X_1(3,j)$ for $j=1,\dots,20$}
\vspace{0.2cm}
\begin{tabular}{|c|l|l|}
\hline
\textbf{j} & \textbf{Sequence terms} & \textbf{OEIS entry and description} \\ \hline
1 & 1, 2, 1, -1, -2, -1, 1, \dots & A057078 \\ \hline
2 & 1, 5, 19, 71, 265, 989, \dots & A001834 \\ \hline
3 & 1, 8, 55, 377, 2584, 17711, \dots & A033890 \\ \hline
4 & 1, 11, 109, 1079, 10681, \dots & A054320 \\ \hline
5 & 1, 14, 181, 2339, 30226, \dots & A126866 \\ \hline
6 & 1, 17, 271, 4319, 68833, \dots & A028230 \\ \hline
7 & 1, 20, 379, 7181, 136060, \dots &  \\ \hline
8 & 1, 23, 505, 11087, 243409, \dots & A133283  \\ \hline
9 & 1, 26, 649, 16199, 404326, \dots &  \\ \hline
10 & 1, 29, 811, 22679, 634201, \dots & A159669 \\ \hline
11 & 1, 32, 991, 30689, 950368, \dots &  \\ \hline
12 & 1, 35, 1189, 40391, 1372105, \dots & A046176 \\ \hline
13 & 1, 38, 1405, 51947, 1920634, \dots &  \\ \hline
14 & 1, 41, 1639, 65519, 2619121, \dots &  \\ \hline
15 & 1, 44, 1891, 81269, 3492676, \dots &  \\ \hline
16 & 1, 47, 2161, 99359, 4568353, \dots & A189173 \\ \hline
17 & 1, 50, 2449, 119951, 5875150, \dots & $a_n + a_{n+1} = \frac{51}{7}$*A154024 \\ \hline
18 & 1, 53, 2755, 143207, 7444009, \dots & $\frac{1}{5}$ * A175180 \\ \hline
19 & 1, 56, 3079, 169289, 9307816, \dots &  \\ \hline
20 & 1, 59, 3421, 198359, 11501401, \dots &  \\ \hline
\end{tabular}
\end{table}

\clearpage

\begin{table}[h!]
\centering
\caption{$X_1(4,j)$ for $j=1,\dots,20$}
\vspace{0.2cm}
\begin{tabular}{|c|l|l|}
\hline
\textbf{j} & \textbf{Sequence terms}  & \textbf{OEIS entry and description} \\ \hline
1 & 1, 3, 5, 7, 9, 11, 13, \dots & A005408 \\ \hline
2 & 1, 7, 41, 239, 1393, 8119, \dots & A002315 \\ \hline
3 & 1, 11, 109, 1079, 10681, 105731, \dots & A054320 \\ \hline
4 & 1, 15, 209, 2911, 40545, 564719, \dots & A077416 \\ \hline
5 & 1, 19, 341, 6119, 109801, 1970300, \dots & A049629 \\ \hline
6 & 1, 23, 505, 11087, 243409, 5343911, \dots & A133283 \\ \hline
7 & 1, 27, 701, 18199, 472473, 12266099, \dots & A157461 \\ \hline
8 & 1, 31, 929, 27839, 834241, 25000391, \dots & A157878 \\ \hline
9 & 1, 35, 1189, 40391, 1372105, 46611179, \dots & A046176 \\ \hline
10 & 1, 39, 1481, 56239, 2135601, 81096599, \dots & A097314 \\ \hline
11 & 1, 43, 1805, 75767, 3180409, 133501399, \dots &  \\ \hline
12 & 1, 47, 2161, 99359, 4568353, 210044879, \dots & A189173 \\ \hline
13 & 1, 51, 2549, 127399, 6367401, 318242651, \dots & $a_{n} + a_{n+1} =$ A004296 \\ \hline
14 & 1, 55, 2969, 160271, 8651665, 466902964, \dots &  \\ \hline
15 & 1, 59, 3421, 198359, 11501401, 666882899, \dots &  \\ \hline
16 & 1, 63, 3905, 242047, 15003009, 929944511, \dots & A001090 (bisection) \\ \hline
17 & 1, 67, 4421, 291719, 19249033, 1270144459, \dots & A078989 \\ \hline
18 & 1, 71, 4969, 347759, 24338161, 1703323511, \dots &  \\ \hline
19 & 1, 75, 5549, 410551, 30375225, 2247356099, \dots &  \\ \hline
20 & 1, 79, 6161, 480499, 37472761, 2922394859, \dots &  \\ \hline
\end{tabular}
\end{table}

\clearpage
\begin{table}[h!]
\centering
\caption{$X_2(1,j)$ for $j=1,\dots,20$}
\vspace{0.2cm}
\begin{tabular}{|c|l|l|}
\hline
\textbf{j} & \textbf{Sequence terms}  & \textbf{OEIS entry and Description} \\ \hline
1 & 1, 1, -1, -1, 1, 1, -1, \dots & Periodic sequence (Period 6) \\ \hline
2 & 1, 3, 5, 7, 9, 11, 13, \dots & {A005408}: The odd numbers \\ \hline
3 & 1, 5, 19, 71, 265, 989, \dots & {A001834}: $a(n) = 4a(n-1) - a(n-2)$ \\ \hline
4 & 1, 7, 41, 239, 1393, 8119, \dots & {A002315}: NSW numbers \\ \hline
5 & 1, 9, 71, 559, 4401, 34649, \dots & {A057080}: $U_{2n}(\sqrt{10}/2)$ \\ \hline
6 & 1, 11, 109, 1079, 10681, \dots & {A054320}: $U_{2n}(\sqrt{3})$ \\ \hline
7 & 1, 13, 155, 1847, 22009, \dots & {A077416}: Chebyshev $S$-sequence with Diophantine property \\ \hline
8 & 1, 15, 209, 2911, 40545, \dots & {A028230}: Indices of octagonal square numbers \\ \hline
9 & 1, 17, 271, 4319, 68833, \dots & {A126866}: $a(n) = 16a(n-1) - a(n-2)$ \\ \hline
10 & 1, 19, 341, 6119, 109801, \dots & {A049629}: $a(n) = 18a(n-1) - a(n-2)$ \\ \hline
11 & 1, 21, 419, 8359, 166761, \dots & {A083043}: $a(n) = 20a(n-1) - a(n-2)$ \\ \hline
12 & 1, 23, 505, 11087, 243409, \dots & {A133283}: Numbers $k$ such that $30k^2 + 6$ is a square \\ \hline
13 & 1, 25, 599, 14351, 343825, \dots & {A159661}: Recurrence with $24a(n-1)$ \\ \hline
14 & 1, 27, 701, 18199, 472473, \dots &  {A157461}: Expansion of $\frac{x(x+1)}{x^2-26x+1}.$\\ \hline
15 & 1, 29, 811, 22679, 634201, \dots &  {A159669}: Expansion of $\frac{x(x+1)}{x^2-28x+1}.$\\ \hline
16 & 1, 31, 929, 27839, 834241, \dots &  {A157878}: Expansion of $\frac{x(x+1)}{x^2-30x+1}.$ \\ \hline
17 & 1, 33, 1055, 33727, 1078209, \dots & {A159675}: Expansion of $\frac{x(x+1)}{x^2-32x+1}.$  \\ \hline
18 & 1, 35, 1189, 40391, 1372105, \dots & {A046176}: Indices of hexagonal square numbers \\ \hline
19 & 1, 37, 1331, 47879, 1722313, \dots &  \\ \hline
20 & 1, 39, 1481, 56239, 2135601, \dots & {A097314}: Pell equation solutions related to $38$. \\ \hline
\end{tabular}
\end{table}

\clearpage

\clearpage\begin{table}[h!]
\centering
\caption{$X_2(2,j)$ for $j=2,\dots,20$}
\vspace{0.2cm}
\begin{tabular}{|c|l|l|}
\hline
\textbf{j} & \textbf{Sequence terms}  & \textbf{OEIS entry and description} \\ \hline
2 & 2, 4, 6, 8, 10, 12, 14, 16, \dots & {A005843}: The even numbers \\ \hline
3 & 2, 8, 30, 112, 418, 1560, \dots & {A005319} \\ \hline
4 & 2, 12, 70, 408, 2378, 13860, \dots & {A054486}: Even bisection of A001541 \\ \hline
5 & 2, 16, 126, 992, 7810, 61488, \dots & {A057076} \\ \hline
6 & 2, 20, 198, 1960, 19402, 192060, \dots & {A057077} \\ \hline
7 & 2, 24, 286, 3408, 40610, 483912, \dots & {A057079} \\ \hline
8 & 2, 28, 390, 5432, 75658, 1053780, \dots & {A157065} \\ \hline
9 & 2, 32, 510, 8128, 129538, 2064480, \dots & {A157456} \\ \hline
10 & 2, 36, 646, 11592, 208010, 3732588, \dots & {A157648} \\ \hline
11 & 2, 40, 798, 15920, 317602, 6336120, \dots & {A157778} \\ \hline
12 & 2, 44, 966, 21208, 465610, 10222212, \dots & {A158228} \\ \hline
13 & 2, 48, 1150, 27552, 660098, 15814800, \dots & {A158586} \\ \hline
14 & 2, 52, 1350, 35048, 909898, 23622300, \dots & {A158679} \\ \hline
15 & 2, 56, 1566, 43792, 1224610, 34245288, \dots & {A158768} \\ \hline
16 & 2, 60, 1798, 53880, 1614602, 48384180, \dots & {A158819} \\ \hline
17 & 2, 64, 2046, 65408, 2091010, 66846912, \dots & {A158849} \\ \hline
18 & 2, 68, 2310, 78472, 2665738, 90556620, \dots & {A158888} \\ \hline
19 & 2, 72, 2590, 93168, 3351458, 120559320, \dots & {A158913} \\ \hline
20 & 2, 76, 2886, 109592, 4161610, 158031588, \dots & {A158941} \\ \hline
\end{tabular}
\end{table}
\renewcommand{\refname}{}
\clearpage

\begin{table}[h!]
\centering
\caption{$X_2(3,j)$ for $j=1,\dots,20$}
\vspace{0.2cm}
\begin{tabular}{|c|l|l|}
\hline
\textbf{j} & \textbf{Sequence terms} & \textbf{OEIS entry and description} \\ \hline
1 & 3, 3, 0, -3, -3, 0, 3, \dots & A084103\\ \hline
2 & 3, 12, 45, 168, 627, 2340, \dots & A005320 \\ \hline
3 & 3, 21, 144, 987, 6765, \dots & A014445 \\ \hline
4 & 3, 30, 297, 2940, 29103, \dots & $3$A004189 \\ \hline
5 & 3, 39, 504, 6513, 84165, \dots & $3$A078362 \\ \hline
6 & 3, 48, 765, 12192, 194307, \dots & A001080  \\ \hline
7 & 3, 57, 1080, 20463, 387717, \dots & $3$A078368 \\ \hline
8 & 3, 66, 1449, 31812, 698415, \dots & $3$A077421 \\ \hline
9 & 3, 75, 1872, 46725, 1166253, \dots & $3$A097780\\ \hline
10 & 3, 84, 2349, 65688, 1836915, \dots & $3$A097311 \\ \hline
11 & 3, 93, 2880, 89187, 2761917, \dots &  $3$A200442\\ \hline
12 & 3, 102, 3465, 117708, 3998607, \dots & $3$A029547 \\ \hline
13 & 3, 111, 4104, 151737, 5610165, \dots & $3$A206565 \\ \hline
14 & 3, 120, 4797, 191760, 7665603, \dots &  \\ \hline
15 & 3, 129, 5544, 238263, 10239765, \dots &  \\ \hline
16 & 3, 138, 6345, 291732, 13413327, \dots & $\frac{3}{4}$ A174772\\ \hline
17 & 3, 147, 7200, 352653, 17272797, \dots &  $\frac{3}{7}$ A154024\\ \hline
18 & 3, 156, 8109, 421512, 21910515, \dots & $3$A370188 \\ \hline
19 & 3, 165, 9072, 498795, 27424653, \dots &  \\ \hline
20 & 3, 174, 10089, 584988, 33919215, \dots & $\frac{1}{20}$A004297 \\ \hline
\end{tabular}
\end{table}

\clearpage

\begin{table}[h!]
\centering
\caption{$X_2(4,j)$ for $j=1,\dots,20$}
\vspace{0.2cm}
\begin{tabular}{|c|l|l|}
\hline
\textbf{j} & \textbf{Sequence terms}  & \textbf{OEIS entry and description} \\ \hline
1 & 4, 8, 12, 16, 20, 24, \dots & A008586 \\ \hline
2 & 4, 24, 140, 816, 4756, 27720, \dots & A005319 \\ \hline
3 & 4, 40, 396, 3920, 38804, 384120, \dots & A122652 \\ \hline
4 & 4, 56, 780, 10864, 151316, 2107560, \dots & 4*A007655'  \\ \hline
5 & 4, 72, 1292, 23184, 416020, 7465176, \dots & A060645 \\ \hline
6 & 4, 88, 1932, 42416, 931220, 20444424, \dots & 4*A077421 \\ \hline
7 & 4, 104, 2700, 70096, 1819796, 47244600, \dots & 2*A174779 \\ \hline
8 & 4, 120, 3596, 107760, 3229204, 96768360, \dots & A068204 \\ \hline
9 & 4, 136, 4620, 156944, 5331476, 181113240, \dots & A202299' \\ \hline
10 & 4, 152, 5772, 219184, 8323220, 316063176, \dots & 4 * A078987  \\ \hline
11 & 4, 168, 7052, 296016, 12425620, 521580024, \dots & $\frac{1}{11}$ * A004295' \\ \hline
12 & 4, 184, 8460, 388976, 17884436, 822295080, \dots & A174772' \\ \hline
13 & 4, 200, 9996, 499600, 24970004, 1248000600, \dots & A174776' \\ \hline
14 & 4, 216, 11660, 629424, 33977236, 1834141320, \dots &  \\ \hline
15 & 4, 232, 13452, 779984, 45225620, 2622305976, \dots & $\frac{1}{15}$ * A004297' \\ \hline
16 & 4, 248, 15372, 952816, 59059220, 3660718824, \dots &  \\ \hline
17 & 4, 264, 17420, 1149456, 75846676, 5004731160, \dots & 4 * A097316 \\ \hline
18 & 4, 280, 19596, 1371440, 95981204, 6717312840, \dots & $\frac{2}{3} $ * A174773' \\ \hline
19 & 4, 296, 21900, 1620304, 119880596, 8869543800, \dots & $\frac{2}{3}$ * A174777' \\ \hline
20 & 4, 312, 24332, 1897584, 147987220, 11541105576, \dots &  \\ \hline
\end{tabular}
\end{table}
{\bf Note:} An OEIS sequence with ' after the sequence number denotes the OEIS sequence with the first term deleted.

\end{document}